\documentclass[11pt]{amsart} 
\usepackage{amssymb,amsmath,latexsym,enumerate,graphicx,bbm,mathptmx,microtype,cite,ifthen,color}
\allowdisplaybreaks

\usepackage{cmbright}

\hoffset=0in 
\voffset=0in
\oddsidemargin=0in
\evensidemargin=0in
\topmargin=0.3in 
\headsep=0.15in 
\headheight=8pt
\textwidth=6.5in
\textheight=8.5in

\newtheorem{theorem}{Theorem} 
\newtheorem{corollary}[theorem]{Corollary}

\newtheorem{exam}{Example}
\newenvironment{example}{\begin{exam}\sf}{\end{exam}}
\newtheorem*{rem}{Remarks}

\renewcommand\th{^{\text{th}}}

\newcommand\commentout[1]{}
\newcommand\Def[1]{{\bf #1}}

\newcommand\des{\operatorname{des}} 
\newcommand\Des{\operatorname{Des}} 
\newcommand\maj{\operatorname{maj}}

\newcommand\chap{\operatorname{cha}} 
\newcommand\ehr{\operatorname{ehr}}

\newcommand\ZZ{\mathbb{Z}}
\newcommand\QQ{\mathbb{Q}}
\newcommand\RR{\mathbb{R}}

\newcommand\poly{\mathcal{P}}
\newcommand\qoly{\mathcal{Q}}
\newcommand\kegel{\mathcal{K}}

\newcommand\be{\mathbf{e}}
\newcommand\bg{\mathbf{g}}
\newcommand\bm{\mathbf{m}}
\newcommand\bv{\mathbf{v}}
\newcommand\bw{\mathbf{w}}
\newcommand\bx{\mathbf{x}}

\newcommand\bz{\mathbf{z}}
\newcommand\bzero{\mathbf{0}}

\makeatletter 
\newtheorem*{rep@theorem}{\rep@title}\newcommand{\newreptheorem}[2]{%
\newenvironment{rep#1}[1]{%
\def\rep@title{\bf #2 \ref{##1}}%
\begin{rep@theorem}}%
{\end{rep@theorem}}}
\makeatother
\newreptheorem{theorem}{Theorem}

\newcounter{teach}
\setcounter{teach}{0}

\begin{document}

\title{A closer look at Chapoton's $q$-Ehrhart polynomials}

 \author{Matthias Beck}
 \address{Department of Mathematics\\
          San Francisco State University\\
          San Francisco, CA 94132\\
          U.S.A.}
 \email{mattbeck@sfsu.edu}

 \author{Thomas Kunze}
 \address{Department of Mathematics\\
          University of California\\
          Irvine, CA 92697\\
          U.S.A.}
 \email{tkunze@uci.edu}


\begin{abstract}
If $\poly$ is a lattice polytope (i.e., $\poly$ is the convex hull of finitely many integer
points in~$\RR^d$), Ehrhart's famous theorem (1962) asserts that the integer-point counting function $|t \poly \cap \ZZ^d|$ is a polynomial in the integer variable~$t$.
Chapoton (2016) proved that, given a fixed integral form $\lambda: \ZZ^d \to \ZZ$, there exists a
polynomial $\chap_\poly^\lambda(q,x) \in \QQ(q)[x]$ such that the refined enumeration function $\sum_{ \bm \in t \poly } q^{ \lambda(\bm) }$ equals the evaluation $\chap_\poly^\lambda (q, [t]_q)$ where, as usual, $[t]_q := \frac{ q^t - 1 }{ q-1 }$; naturally, for $q=1$ we recover the Ehrhart polynomial.
Our motivating goal is to view Chapoton's work through the lens of Brion's Theorem (1988), which expresses the integer-point structure of a given polytope via that of its vertex cones.
It turns out that this viewpoint naturally yields various refinements and extensions of
Chapoton's results, including explicit formulas for $\chap_\poly^\lambda(q,x)$, its leading
coefficient, and its behavior as $t \to \infty$. We also prove an analogue of Chapoton's
structural and reciprocity theorems for rational polytopes (i.e., with vertices in~$\QQ^d$).
\end{abstract}

\keywords{Lattice polytope, rational polytope, $q$-Ehrhart polynomial, Chapoton polynomial,
quasipolynomial, reciprocity theorem, vertex cone.}

\subjclass[2010]{Primary 52B20; Secondary 05A15.}

\date{2 May 2026}

\thanks{We are grateful to Tewodros Amdeberhan, Fr\'ed\'eric Chapoton, Serkan Ho\c sten, Leonid Monin, Dusty Ross,
Raman Sanyal, Christopher Voll, and an anonymous referee for helpful remarks and pointers to the literature.}

\maketitle


\section{Chapoton Polynomials}

Let $\poly \subset \RR^d$ be a \Def{lattice polytope}, i.e., the convex hull of finitely many points in $\ZZ^d$.
The \Def{Ehrhart polynomial} of $\poly$ is defined via $\ehr_\poly(t) := |t \poly \cap \ZZ^d|$ for $t \in \ZZ_{
>0 }$~\cite{ehrhartpolynomial}. It is an important invariant of $\poly$ and enjoys numerous applications in various mathematical fields (see, e.g.,~\cite{ccd}). 
Chapoton~\cite{chapoton} initiated the study of
\[
  \ehr_\poly^\lambda(q,t) \, := \sum_{ \bm \in t \poly \cap \mathbb{Z}^d} q^{ \lambda(\bm) }
\]
where $\lambda: \ZZ^d \to \ZZ$ is a fixed integral form. Naturally, for $q=1$ we recover the Ehrhart polynomial
of $\poly$, but the algebraic structure of $\ehr_\poly^\lambda(q,t)$ is a priori not clear.
Chapoton's main theorem~\cite{chapoton} is as follows.
We define, as usual, $[t]_q := \frac{ q^t - 1 }{ q-1 }$ and 
we call two vertices $\bv, \bw$ of $\poly$ \Def{adjacent} if they form an edge of $\poly$.
We say that $\lambda$ is \Def{generic and positive with respect to} $\poly$ if $\lambda(\bv) \ne
\lambda(\bw)$ whenever $\bv$ and $\bw$ are adjacent vertices, and $\lambda(\bv) \ge 0$ for any
vertex~$\bv$.

\begin{theorem}[Chapoton]\label{thm:chapoton}
If $\poly$ is a lattice polytope and $\lambda$ is an integral form that is generic and positive with
respect to $\poly$, then there exists a polynomial $\chap_\poly^\lambda(q,x) \in \QQ(q)[x]$ such
that \[ \chap_\poly^\lambda \left(q, [t]_q \right) \, = \, \ehr_\poly^\lambda(q,t) \] for all positive integers $t$.
The degree of $\chap_\poly^\lambda(q,x)$ is $m := \max \{ \lambda(\bv) : \, \bv \text{
\rm vertex of } \poly \}$, and all the poles of the coefficients of $\chap_\poly^\lambda(q,x)$
are roots of unity of order~$\le m$.
\end{theorem}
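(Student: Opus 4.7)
The strategy is to process each dilate $t\poly$ through Brion's theorem and then track the $t$-dependence at each vertex. Brion's identity gives, as rational functions in $\bx$,
\[
\sum_{\bm \in t\poly \cap \ZZ^d} \bx^\bm \, = \, \sum_{\bv} \bx^{t\bv} \, \sigma_\bv(\bx),
\]
where $\bv$ ranges over the vertices of $\poly$ and $\sigma_\bv(\bx)$ denotes the integer-point transform of the tangent cone at $\bv$ translated so its apex sits at the origin; crucially, $\sigma_\bv(\bx)$ depends on $\poly$ and $\bv$ only, not on~$t$. The specialisation $\bx \mapsto (q^{\lambda(\be_1)}, \ldots, q^{\lambda(\be_d)})$ converts each $\sigma_\bv(\bx)$ into a rational function $R_\bv(q) \in \QQ(q)$; genericity of $\lambda$ is precisely what keeps the denominators $1 - q^{\lambda(\bw - \bv)}$, for $\bw$ adjacent to~$\bv$, from being identically zero. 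The outcome is the vertex-cone decomposition
\[
\ehr_\poly^\lambda(q, t) \, = \, \sum_\bv q^{t \lambda(\bv)} \, R_\bv(q) \qquad \text{for } t \in \ZZ_{>0}.
\]

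The second ingredient is the elementary identity $q^t = 1 + (q-1)[t]_q$. Because $\lambda(\bv) \ge 0$ by positivity, each $q^{t \lambda(\bv)} = \bigl(1 + (q-1)[t]_q\bigr)^{\lambda(\bv)}$ is a polynomial in $[t]_q$ of degree $\lambda(\bv)$ with $\ZZ[q]$-coefficients, so
\[
\chap_\poly^\lambda(q, x) \, := \, \sum_\bv \bigl(1 + (q-1) x\bigr)^{\lambda(\bv)} R_\bv(q) \, \in \, \QQ(q)[x]
\]
is a polynomial in $x$ of degree at most $m$ that manifestly satisfies $\chap_\poly^\lambda(q, [t]_q) = \ehr_\poly^\lambda(q, t)$ for every $t \in \ZZ_{>0}$. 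To see the degree equals~$m$, set $y := 1 + (q-1)x$ so that $\chap_\poly^\lambda = \sum_\bv y^{\lambda(\bv)} R_\bv(q)$; the $y^m$-coefficient equals $\sum_{\bv :\, \lambda(\bv) = m} R_\bv(q)$, and this is nonzero because at any maximising vertex $\bv^*$ genericity forces $\lambda < 0$ on the non-origin part of the translated tangent cone, so $R_{\bv^*}(q) \to 1$ as $q \to \infty$ along the real axis, and the sum of these positive limits cannot vanish.

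For the pole structure, triangulate each tangent cone into simplicial subcones using rays drawn from $\bv$ toward other vertices of~$\poly$. Each resulting $R_\bv(q)$ is a sum of rational functions whose denominators are products of factors $1 - q^{\lambda(\bw) - \lambda(\bv)}$ for pairs of polytope vertices $\bv,\bw$. Positivity and $\lambda(\bv) \le m$ at every vertex give $|\lambda(\bw) - \lambda(\bv)| \le m$, and the cyclotomic factorisation $1 - q^k = \prod_{d \mid k} \Phi_d(q)$ then confines every pole of $R_\bv(q)$ to a root of unity of order at most~$m$. Since the substitution $y = 1 + (q-1)x$ contributes only $\ZZ[q]$-factors, the $x$-coefficients of $\chap_\poly^\lambda(q,x)$ inherit the same pole bound.

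The principal obstacle is the non-simple case: an arbitrary triangulation of the tangent cone can introduce spurious denominators $1 - q^{\lambda(\bw) - \lambda(\bv)}$ with $\lambda(\bw) = \lambda(\bv)$ (genericity was only assumed on polytope edges, not on all pairs of vertices). The remedy is either to choose the triangulation so that no such pair arises, or to exploit the fact that $\sigma_\bv(\bx)$ is a single rational function independent of the triangulation, so that the actual poles of $R_\bv(q)$ satisfy the bound with apparent extra poles cancelling across the triangulation. Once this bookkeeping is in place, the Brion decomposition, the substitution $q^t = 1 + (q-1)[t]_q$, and the cyclotomic pole analysis combine cleanly to yield the theorem.
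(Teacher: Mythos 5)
Your proposal follows essentially the same route as the paper: Brion's theorem applied to $t\poly$, the specialization $z_i \mapsto q^{\lambda_i}$, the identity $q^{t} = 1+(q-1)[t]_q$, and a pole analysis via triangulation of the vertex cones (your argument that the leading coefficient does not vanish, via the limit $q\to\infty$ at the maximizing vertices, is a sound way to pin down the exact degree). The ``principal obstacle'' you raise at the end is resolved exactly by your first remedy --- every rational cone can be triangulated without introducing new generators, so the denominator of $\rho_\bv^\lambda(q)$ involves only the primitive edge directions $g(\bw-\bv)$ for $\bw$ adjacent to $\bv$, where genericity applies --- which is precisely how the paper's Theorem~\ref{thm:poles} handles it; in particular no rays toward non-adjacent vertices are ever needed.
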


So the $q$-Ehrhart polynomial $\ehr_\poly^\lambda(q,t)$ (naturally) contains more information than $\ehr_\poly(t)$ but still retains some
polynomial structure (as opposed to, say, the multigraded Hilbert series of the cone over~$\poly$). Chapoton's
work has been applied, e.g., to poset enumeration invariants~\cite{kimsong,kimstanton} and graph coloring~\cite{qchromatic}.

\begin{example}\label{ex:triangle}
Let $\Delta$ be the triangle with vertices $(0,0)$, $(1,0)$, and $(0,1)$, and let $\lambda = (1,2)$. Then (as we will compute below in Example~\ref{ex:brioncomp})
\[
  \chap_\Delta^\lambda(q,x) \, = \, \frac{q^3}{q + 1} \, x^2 + \frac{q(2 q + 1)}{q + 1} \, x + 1 \, .
\]
\end{example}

It turns out that, in several special cases, Chapoton polynomials have been computed in the
(sometimes distant) past; Section~\ref{sec:connections} contains some of these instances and
connects them to the underlying polyhedral geometry.
We also remark that~\cite{robinsintpttransform} implies that the set of polynomials $\chap_\poly^\lambda(q,x)$,
where $\lambda$ varies over all valid integral forms, determines the polytope $\poly$, in stark difference to
the classic Ehrhart polynomial.

Our motivating goal is to view Chapoton's Theorem~\ref{thm:chapoton} through the lens of Brion's Theorem~\cite{brion}, which expresses the
integer-point structure of a given polytope via that of its vertex cones.
It turns out that this viewpoint naturally yields various refinements and extensions of
Chapoton's work. We describe their statements next; the accompanying proofs are in Section~\ref{sec:proofs}.

Given a vertex $\bv$ of $\poly$, let $\kegel_\bv$ be the conical hull of all edge directions at
$\bv$, that is,
\[
  \kegel_\bv \, := \sum_{ \bw \text{ adjacent to } \bv } \RR_{ \ge 0 } (\bw - \bv) \, .
\] 
(The affine cone $\bv + \kegel_\bv$ is known as the \Def{vertex cone} of $\poly$ at~$\bv$; we
will use this nomenclature for both $\kegel_\bv$ and $\bv + \kegel_\bv$.)
Given a set $S \subset \RR^d$, we define its \Def{integer-point transform} as
\[
  \sigma_S (\bz) \, := \sum_{ \bm \in S \cap \ZZ^d } \bz^\bm
\]
where $\bz^\bm := z_1^{ m_1 } z_2^{ m_d } \cdots z_d^{ m_d }$. 
When $S$ is a rational cone, $\sigma_S(\bz)$ evaluates to a rational function, and so we may
define, for a given integral form $\lambda$,
\begin{equation}\label{eq:vconerat}
  \rho_\bv^\lambda(q) \, := \, \sigma_{ \kegel_\bv } \left( q^{ \lambda_1 } , \, q^{ \lambda_2 } , \, \dots, \, q^{ \lambda_d } \right) ,
\end{equation}
a rational function in $q$. (Naturally, this is only possible for a form $\lambda$ for which this evaluation
does not create zeros in the denominator. As we will see below, this is equivalent to Chapoton's
genericity condition.)
Our first result gives an explicit formula for Chapoton's $q$-Ehrhart polynomials.

\begin{theorem}\label{thm:firstthm}
If $\poly$ is a lattice polytope and $\lambda$ is an integral form that is generic and positive with respect to $\poly$, then
\[
  \ehr_\poly^\lambda(q,t) \, = \sum_{ \bv \text{ \rm vertex of } \poly } \rho_\bv^\lambda(q) \bigl( (q-1) [t]_q + 1 \bigr)^{ \lambda(\bv) }
\]
where $\rho_\bv^\lambda(q)$ is the rational function defined in~\eqref{eq:vconerat}.
\end{theorem}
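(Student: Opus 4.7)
The plan is to apply Brion's Theorem to the dilate $t\poly$ and then specialize the resulting rational-function identity at $\bz = (q^{\lambda_1}, \ldots, q^{\lambda_d})$.

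First, I would observe that the vertices of $t\poly$ are exactly the points $t\bv$ for $\bv$ a vertex of $\poly$, and that the cone of edge directions is unchanged under dilation, so the vertex cone of $t\poly$ at $t\bv$ is $t\bv + \kegel_\bv$. Brion's Theorem therefore yields, as an identity of rational functions in $\bz$,
\[
  \sigma_{t\poly}(\bz) \, = \sum_{\bv \text{ vertex of } \poly} \sigma_{t\bv + \kegel_\bv}(\bz) \, = \sum_\bv \bz^{t\bv}\, \sigma_{\kegel_\bv}(\bz).
\]

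Next I would substitute $z_i \mapsto q^{\lambda_i}$. The left-hand side becomes $\ehr_\poly^\lambda(q,t)$ (and is an honest Laurent polynomial since $t\poly$ is bounded), while the $\bv$-th summand on the right becomes $q^{t\lambda(\bv)}\, \rho_\bv^\lambda(q)$. To see that this substitution is well-defined termwise on the right, I would triangulate $\kegel_\bv$ into simplicial subcones whose rays are edge directions at $\bv$. Each simplicial piece contributes denominator factors of the form $1 - \bz^{\bw - \bv}$ with $\bw$ adjacent to $\bv$; under the substitution these become $1 - q^{\lambda(\bw) - \lambda(\bv)}$, which are nonzero elements of $\QQ(q)$ precisely because genericity forces $\lambda(\bw) \ne \lambda(\bv)$. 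Consequently $\rho_\bv^\lambda(q) \in \QQ(q)$ is well-defined.

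Finally I would invoke the elementary identity $q^t = (q-1)[t]_q + 1$ to rewrite $q^{t\lambda(\bv)} = \bigl((q-1)[t]_q + 1\bigr)^{\lambda(\bv)}$, yielding the claimed formula.

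The main obstacle is the justification of the termwise specialization: Brion's identity is a formal equality of rational functions whose individual summands carry poles that, in general, cancel only after summation. One must verify that each individual summand survives the evaluation at $\bz = (q^{\lambda_1},\dots,q^{\lambda_d})$, and this is exactly what the genericity hypothesis is engineered to guarantee via the triangulation argument above. Positivity of $\lambda$ plays no role in the present formula; it will become relevant only when one extracts the polynomial-in-$[t]_q$ structure of $\ehr_\poly^\lambda(q,t)$ and recovers Chapoton's Theorem~\ref{thm:chapoton}.
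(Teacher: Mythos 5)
Your proposal is correct and follows essentially the same route as the paper: apply Brion's Theorem to $t\poly$, specialize the integer-point transforms at $\bz = (q^{\lambda_1},\dots,q^{\lambda_d})$, and rewrite $q^{t\lambda(\bv)}$ via the identity $q^{kt} = \bigl((q-1)[t]_q+1\bigr)^k$. Your extra care in justifying the termwise specialization through genericity is a welcome addition (the paper defers this point to its proof of Theorem~\ref{thm:poles}), and your observation that positivity is not needed for this identity is consistent with the paper.
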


Thus 
\[
  \chap_\poly^\lambda(q,x) \, = \sum_{ \bv \text{ \rm vertex of } \poly } \rho_\bv^\lambda(q)
\bigl( (q-1) x + 1 \bigr)^{ \lambda(\bv) } ,
\]
and so Theorem~\ref{thm:firstthm}
immediately recovers the first two statements in Theorem~\ref{thm:chapoton}; we also refine the
third one, as follows. Given a vector $\bm \in \ZZ^d$, we denote by $g(\bm)$ the vector $\bm$ scaled down by the
gcd of the entries of $\bm$. Thus when $\bv$ and $\bw$ are adjacent vertices of $\poly$, then $g(\bw-\bv)$ is
the primitive edge direction from $\bv$ to~$\bw$.

\begin{theorem}\label{thm:poles}
Each pole of $\rho_\bv^\lambda(q)$ (defined in~\eqref{eq:vconerat}) is an $n\th$ root of unity where $n = |\lambda(g(\bw-\bv))|$ for some adjacent vertex~$\bw$.
\end{theorem}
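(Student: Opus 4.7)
The plan is to compute $\rho_\bv^\lambda(q)$ via the classical formula for the integer-point transform of a simplicial cone, and then to handle the general case via triangulation.

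First I would treat the case when $\kegel_\bv$ is simplicial. Its ray generators are then exactly the primitive edge directions $\mathbf{u}_i := g(\bw_i - \bv)$ toward the $d$ adjacent vertices $\bw_1, \dots, \bw_d$. The standard formula
\[
  \sigma_{\kegel_\bv}(\bz) \, = \, \frac{\sigma_\Pi(\bz)}{\prod_{i=1}^d \bigl( 1 - \bz^{\mathbf{u}_i} \bigr)} \, ,
\]
in which $\Pi$ is the half-open fundamental parallelepiped and $\sigma_\Pi(\bz)$ is a Laurent polynomial, becomes
\[
  \rho_\bv^\lambda(q) \, = \, \frac{\sigma_\Pi\bigl( q^{\lambda_1}, \dots, q^{\lambda_d} \bigr)}{\prod_{i=1}^d \bigl( 1 - q^{\lambda(\mathbf{u}_i)} \bigr)}
\]
after substituting $z_j = q^{\lambda_j}$; this substitution is legitimate precisely because Chapoton's genericity guarantees $\lambda(\mathbf{u}_i) \ne 0$. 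The zeros of the factor $1 - q^{\lambda(\mathbf{u}_i)}$ are exactly the $n_i\th$ roots of unity with $n_i = |\lambda(\mathbf{u}_i)| = |\lambda(g(\bw_i - \bv))|$, yielding the asserted form of the poles.

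For the general case I would triangulate $\kegel_\bv$ into simplicial subcones without introducing new rays; such a triangulation exists for any pointed rational cone (e.g., via a pulling or placing construction). One can then write $\sigma_{\kegel_\bv}(\bz)$ either as an inclusion--exclusion over these simplicial pieces and their common faces, or as a sum over half-open simplicial cones in the spirit of K\"oppe--Verdoolaege. Either way, every summand carries a denominator of the form $\prod_i (1 - \bz^{\mathbf{u}_i})$ whose $\mathbf{u}_i$ are among the ray generators of $\kegel_\bv$---equivalently, among the primitive edge directions toward adjacent vertices of $\poly$. After substitution, each summand's denominator therefore divides $\prod_{\bw \text{ adj.\ to } \bv} \bigl( 1 - q^{\lambda(g(\bw - \bv))} \bigr)$, and the same is true of their sum.

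The main (if mild) obstacle is the observation that addition of rational functions can only cancel poles, never create new ones; hence each pole of $\rho_\bv^\lambda(q)$ is a pole of one of the simplicial summands, and is accordingly among the roots of unity described in the statement. A secondary subtlety is to invoke the existence of a ray-preserving simplicial triangulation of a pointed rational cone, but this is a classical fact in polyhedral geometry and so should not cause real trouble.
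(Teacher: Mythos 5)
Your proposal is correct and follows essentially the same route as the paper: express $\sigma_{\kegel_\bv}(\bz)$ over the denominator $\prod_i (1-\bz^{\mathbf{u}_i})$ indexed by the primitive edge directions (first for simplicial cones, then in general via a triangulation that introduces no new rays), and specialize $z_j \mapsto q^{\lambda_j}$. Your explicit remarks that genericity keeps the specialized denominator from vanishing and that summing rational functions can only cancel, never create, poles are details the paper leaves implicit, but the argument is the same.
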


Theorem~\ref{thm:firstthm} can easily be restated in terms of the coefficients of the Chapoton
polynomials:

\begin{corollary}\label{cor:chappolexpresssion}
If $\poly$ is a lattice polytope and $\lambda$ is an integral form that is generic and positive with respect to $\poly$, then
\[
  \chap_\poly^\lambda(q,x) \, = \sum_{ k = 0 }^{ \max \lambda(\bv) } (q-1)^k \sum_{ \bv \text{ \rm vertex
of } \poly } \binom{ \lambda(\bv) } k \, \rho_\bv^\lambda(q) \, x^k
\]
where $\rho_\bv^\lambda(q)$ is the rational function defined in~\eqref{eq:vconerat}.
\end{corollary}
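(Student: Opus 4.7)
The plan is to derive this directly from Theorem~\ref{thm:firstthm} by expanding the binomial $\bigl( (q-1)x + 1 \bigr)^{\lambda(\bv)}$ and interchanging the order of summation. Since the statement is an identity of polynomials in~$x$ with coefficients in $\QQ(q)$, it suffices to carry out this algebraic manipulation carefully, then read off the coefficient of each~$x^k$.

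Concretely, I would start from
\[
  \chap_\poly^\lambda(q,x) \, = \sum_{ \bv \text{ vertex of } \poly } \rho_\bv^\lambda(q) \bigl( (q-1) x + 1 \bigr)^{ \lambda(\bv) } ,
\]
which is the reformulation of Theorem~\ref{thm:firstthm} noted immediately after its statement. Because $\lambda$ is positive with respect to $\poly$, each exponent $\lambda(\bv)$ is a nonnegative integer, so the binomial theorem applies to give
\[
  \bigl( (q-1) x + 1 \bigr)^{ \lambda(\bv) } \, = \sum_{ k = 0 }^{ \lambda(\bv) } \binom{ \lambda(\bv) } k (q-1)^k x^k .
\]
Substituting and swapping the two finite sums yields
\[
  \chap_\poly^\lambda(q,x) \, = \sum_{ k \ge 0 } (q-1)^k \left( \sum_{ \bv \text{ vertex of } \poly } \binom{ \lambda(\bv) } k \rho_\bv^\lambda(q) \right) x^k .
\]
Using the convention $\binom{ \lambda(\bv) } k = 0$ whenever $k > \lambda(\bv)$, the outer sum truncates at $k = m := \max \{\lambda(\bv)\}$, matching the upper limit in the statement of the corollary.

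There is essentially no obstacle here: the argument is a routine bookkeeping of coefficients, and Chapoton's generic/positive hypothesis is used only insofar as it is already needed in Theorem~\ref{thm:firstthm} (so that each $\rho_\bv^\lambda(q)$ is well defined as a rational function in $q$, and so that each $\lambda(\bv)$ is a nonnegative integer for which the binomial expansion terminates). The only mild subtlety worth spelling out is that the interchange of summations is legitimate because both sums are finite, so no convergence issues arise in $\QQ(q)[x]$.
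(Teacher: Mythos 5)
Your proposal is correct and matches the paper's intent exactly: the paper gives no separate proof of this corollary, stating only that Theorem~\ref{thm:firstthm} ``can easily be restated in terms of the coefficients,'' and the intended argument is precisely your binomial expansion of $\bigl((q-1)x+1\bigr)^{\lambda(\bv)}$ followed by an interchange of the two finite sums. Nothing further is needed.
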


\begin{corollary}
If $\poly$ is a lattice polytope and $\lambda$ is an integral form that is generic and positive with respect to
$\poly$, then the leading coefficient of $\chap_\poly^\lambda(q,x)$ is $(q-1)^{ \lambda(\bv) } \rho_\bv^\lambda(q)$ where $\bv$ is the vertex of $\poly$ that maximizes~$\lambda(\bv)$.
\end{corollary}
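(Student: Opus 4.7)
The plan is to read off the leading coefficient directly from the explicit expression in Corollary~\ref{cor:chappolexpresssion}. Setting $m := \max_\bv \lambda(\bv)$, the inner sum indexed by $k = m$ gives the coefficient of $x^m$ as
\[
  (q-1)^m \sum_{ \bv \text{ vertex of } \poly } \binom{ \lambda(\bv) }{ m } \rho_\bv^\lambda(q) .
\]
Since $\lambda(\bv) \le m$ for every vertex $\bv$, the binomial coefficient $\binom{\lambda(\bv)}{m}$ vanishes unless $\lambda(\bv) = m$, in which case it equals $1$. Thus the leading coefficient collapses to $(q-1)^m \sum_{\bv:\, \lambda(\bv) = m} \rho_\bv^\lambda(q)$.

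The second step is to argue that the maximum of $\lambda$ on the vertex set is attained at a \emph{unique} vertex, which is where the genericity hypothesis enters. Because $\lambda$ is linear, the set of points in $\poly$ maximizing $\lambda$ is a face $F$ of $\poly$. If $\dim F \ge 1$, then $F$ contains an edge, hence a pair of adjacent vertices on which $\lambda$ takes the same value, contradicting genericity. Therefore $F$ is a single vertex $\bv^\ast$, and the displayed sum reduces to the single term $(q-1)^{ \lambda(\bv^\ast) } \rho_{\bv^\ast}^\lambda(q)$, which is the claimed formula.

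The only thing left to verify is that this is a genuine leading coefficient, i.e., that it does not vanish identically as a rational function of $q$. This is immediate from the definition~\eqref{eq:vconerat}: the vertex cone $\kegel_{\bv^\ast}$ contains the origin, so $\sigma_{\kegel_{\bv^\ast}}(\bz)$ has constant term $1$, and hence $\rho_{\bv^\ast}^\lambda(q)$ is a nonzero rational function; the factor $(q-1)^{\lambda(\bv^\ast)}$ is obviously nonzero as well. The main (minor) obstacle is simply the uniqueness-of-maximizer argument, since the definition of genericity in the paper is phrased only in terms of adjacent pairs rather than as a direct statement about maximizers; the linearity of $\lambda$ bridges this gap via the standard face-maximization picture.
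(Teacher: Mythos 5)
Your proof is correct and follows essentially the same route the paper intends: the corollary is stated as an immediate consequence of Corollary~\ref{cor:chappolexpresssion}, reading off the $k=m$ term, noting that $\binom{\lambda(\bv)}{m}$ vanishes unless $\lambda(\bv)=m$, and using genericity (via the face-of-maximizers argument) to conclude the maximizing vertex is unique. Your extra check that the coefficient is not identically zero goes slightly beyond what the paper records explicitly; the cleanest justification there is that at the maximizing vertex $\lambda$ is strictly negative on every edge generator, so $\rho_{\bv^\ast}^\lambda(q)$ is a Laurent series in $q^{-1}$ with constant term $1$, but this is a side remark and does not affect the verdict.
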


In \cite[Section~4.1]{chapoton}, Chapoton suggests to compute $\chap_\poly^\lambda(q, \frac{ 1 }{
1-q })$ (and does so in the case when $\poly$ is an order polytope), as an analogue of computing $\ehr_\poly^\lambda(q,t)$ in the limit as $t \to \infty$.
Theorem~\ref{thm:firstthm} immediately yields this evaluation:

\begin{corollary}\label{cor:ttoinfinity}
If $\poly$ is a lattice polytope and $\lambda$ is an integral form that is generic and positive with respect to $\poly$, then
\[
  \chap_\poly^\lambda \left( q, \frac{ 1 }{ 1-q } \right) 
  \, = \begin{cases}
    \rho_\bzero^\lambda(q) & \text{ if } \bzero \text{ is a vertex of } \poly, \\
    0 & \text{ otherwise. }
  \end{cases}
\]
\end{corollary}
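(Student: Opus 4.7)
The plan is to substitute $x = \tfrac{1}{1-q}$ directly into the closed form
\[
  \chap_\poly^\lambda(q,x) \; = \sum_{\bv \text{ vertex of } \poly} \rho_\bv^\lambda(q)\,\bigl((q-1)x + 1\bigr)^{\lambda(\bv)}
\]
recorded immediately after Theorem~\ref{thm:firstthm}. The key observation is the trivial algebraic identity $(q-1)\cdot\tfrac{1}{1-q} + 1 = -1 + 1 = 0$, so each summand collapses to $\rho_\bv^\lambda(q) \cdot 0^{\lambda(\bv)}$. Since positivity forces $\lambda(\bv) \geq 0$ at every vertex, every term with $\lambda(\bv) > 0$ vanishes, and with the convention $0^0 = 1$ only the height-zero vertices survive, each contributing the full factor $\rho_\bv^\lambda(q)$.

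It remains to identify the height-zero vertices. Since $\lambda$ is linear, $\lambda(\bzero) = 0$ automatically, so whenever $\bzero$ is a vertex of $\poly$ it contributes $\rho_\bzero^\lambda(q)$. To match the claimed formula, I would next argue that at most one vertex can lie in the kernel of $\lambda$: if two distinct vertices $\bv, \bv'$ both satisfied $\lambda(\bv) = \lambda(\bv') = 0$, then by convexity and positivity $\lambda \equiv 0$ on the whole segment $[\bv,\bv']$, so the minimum face $F := \{\bx \in \poly : \lambda(\bx) = 0\}$ would be a face of $\poly$ of positive dimension containing both. But any such face carries an edge $[\bu,\bw]$ of $\poly$, and then $\bu,\bw$ would be adjacent vertices with $\lambda(\bu) = \lambda(\bw) = 0$, contradicting the genericity of $\lambda$.

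I do not foresee a serious obstacle: the corollary is in essence a one-line substitution into Theorem~\ref{thm:firstthm}. The only supporting ingredient is the short convexity/face-theoretic uniqueness argument above, which consumes precisely both halves of the ``generic and positive'' hypothesis and which is the only point at which one needs to do more than push symbols.
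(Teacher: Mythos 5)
Your substitution is exactly the paper's (implicit) proof: the corollary is obtained by plugging $x = \frac{1}{1-q}$ into the closed form following Theorem~\ref{thm:firstthm} and using $(q-1)\frac{1}{1-q}+1=0$, so that only vertices with $\lambda(\bv)=0$ survive. Your uniqueness argument --- genericity plus positivity force the face $\{\bx\in\poly : \lambda(\bx)=0\}$ to be at most a single vertex --- is correct and is a genuinely needed ingredient that the paper leaves unstated; without it the surviving sum could a priori contain several terms.

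There is, however, a gap in the ``otherwise'' branch, and it is precisely where your write-up goes quiet: nothing in your argument (nor in the hypotheses) forces the unique kernel vertex, when it exists, to be $\bzero$. If $\poly$ has a vertex $\bv_0\ne\bzero$ with $\lambda(\bv_0)=0$, the substitution leaves the nonzero term $\rho_{\bv_0}^\lambda(q)$ even though $\bzero$ is not a vertex. Concretely, take $\poly=\conv\{(1,-1),(0,1)\}$ and $\lambda=(1,1)$, which is generic and positive; the lattice points of $t\poly$ are $(t-j,-t+2j)$ for $0\le j\le t$, so $\ehr_\poly^\lambda(q,t)=[t+1]_q$, whence $\chap_\poly^\lambda(q,x)=1+qx$ and $\chap_\poly^\lambda\left(q,\frac{1}{1-q}\right)=\frac{1}{1-q}=\rho_{(1,-1)}^\lambda(q)\ne 0$, although $\bzero$ is not a vertex of $\poly$. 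So the second branch cannot be proved as literally stated. What your argument does establish --- and what the substitution actually yields --- is that $\chap_\poly^\lambda\left(q,\frac{1}{1-q}\right)$ equals $\rho_{\bv_0}^\lambda(q)$ when there is a (necessarily unique) vertex $\bv_0$ with $\lambda(\bv_0)=0$, and equals $0$ when $\lambda$ is strictly positive on every vertex. To recover the corollary in the stated form you must either add the hypothesis that $\lambda(\bv)>0$ for every vertex $\bv\ne\bzero$, or record the corrected dichotomy just described; your proof as written silently conflates ``$\bzero$ is not a vertex'' with ``no vertex lies in the kernel of $\lambda$.''
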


The interpretation $t \to \infty$ is reflected in this formula: in the first case, $\poly$
becomes $\kegel_\bzero$ as $t \to \infty$, whereas in the second case, $\poly$ wanders off into the
horizon.\footnote{
To the Ehrhart experts, we remark that $\chap_\poly^\lambda ( q, x)$ is not translation invariant.
}

Our next result follows from Corollary~\ref{cor:chappolexpresssion}; it was implicitly already proved in~\cite{chapoton}.

\begin{corollary}\label{cor:chappolconstantterm}
If $\poly$ is a lattice polytope and $\lambda$ is an integral form that is generic and positive with respect to $\poly$, then the constant term of $\chap_\poly^\lambda(q,x)$ is $1$.
\end{corollary}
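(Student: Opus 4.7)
The plan is to first reduce the problem to an identity about vertex-cone rational functions and then verify that identity by extending Brion's theorem to a degenerate scaling. For the reduction, I would invoke Corollary~\ref{cor:chappolexpresssion} with $k = 0$ (equivalently, substitute $x = 0$ into the formula of Theorem~\ref{thm:firstthm}): both give
\[
  [x^{0}]\, \chap_\poly^\lambda(q, x) \,=\, \sum_\bv \rho_\bv^\lambda(q) .
\]
So the task reduces to proving $\sum_\bv \rho_\bv^\lambda(q) = 1$.

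For this identity, I would start from the Brion-type formula from Theorem~\ref{thm:firstthm}, namely
\[
  \ehr_\poly^\lambda(q, t) \,=\, \sum_\bv \rho_\bv^\lambda(q)\, q^{t \lambda(\bv)},
\]
which holds for every positive integer $t$. Both sides are polynomials in $T := q^{t}$ of degree at most $m$---on the right by inspection, on the left via the chain $\ehr_\poly^\lambda(q, t) = \chap_\poly^\lambda(q, [t]_q) = \chap_\poly^\lambda\!\left(q, \tfrac{T-1}{q-1}\right)$---and they agree at the infinitely many values $T = q, q^{2}, q^{3}, \ldots$, hence coincide as polynomials in $T$. Specializing to $T = 1$ (i.e.\ $t = 0$), the right side equals $\sum_\bv \rho_\bv^\lambda(q)$, while the left side, under the natural convention $0 \cdot \poly = \{\bzero\}$, equals $q^{\lambda(\bzero)} = 1$. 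Combining these yields $\sum_\bv \rho_\bv^\lambda(q) = 1$.

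The delicate step is justifying that the polynomial extrapolation of $\ehr_\poly^\lambda(q, t)$ at $t = 0$ really matches the natural Ehrhart value $1$. Equivalently, one needs the ``tangent cone'' identity $\sum_\bv \sigma_{\kegel_\bv}(\bz) = 1$ as rational functions in $\bz$---a shadow of Brion's theorem obtained by taking the decomposition $\sigma_{t\poly}(\bz) = \sum_\bv \bz^{t\bv}\, \sigma_{\kegel_\bv}(\bz)$ into the degenerate limit $t \to 0$: the left side collapses to $\sigma_{\{\bzero\}}(\bz) = 1$, and each prefactor $\bz^{t\bv}$ tends to $1$. This observation is what is implicit in Chapoton's original treatment.
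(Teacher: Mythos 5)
Your reduction is exactly the paper's: the constant term is $\sum_{\bv} \rho_\bv^\lambda(q)$ (Corollary~\ref{cor:chappolexpresssion} with $k=0$), and everything hinges on the identity $\sum_{\bv} \sigma_{\kegel_\bv}(\bz) = 1$. (The intermediate polynomial-in-$T$ interpolation adds nothing: the identity $\chap_\poly^\lambda\bigl(q,\tfrac{T-1}{q-1}\bigr) = \sum_\bv \rho_\bv^\lambda(q)\,T^{\lambda(\bv)}$ holds identically in $T$ by the formula for $\chap_\poly^\lambda$, so evaluating at $T=1$ merely restates the reduction; the entire content sits in your ``delicate step.'')

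That step, as you justify it, has a genuine gap. The decomposition $\sigma_{t\poly}(\bz) = \sum_\bv \bz^{t\bv}\sigma_{\kegel_\bv}(\bz)$ is an identity of rational functions valid only for integers $t \ge 1$: for non-integral $t$ the monomial $\bz^{t\bv}$ is not defined, and Brion's theorem for the rational polytope $t\poly$ produces $\sigma_{t\bv+\kegel_\bv}(\bz)$, which does \emph{not} factor as $\bz^{t\bv}\sigma_{\kegel_\bv}(\bz)$ when $t\bv \notin \ZZ^d$. Worse, the claimed limit of the left side is wrong in general: if $\bzero \notin \poly$, then $t\poly \cap \ZZ^d = \emptyset$ for all sufficiently small $t>0$, so $\sigma_{t\poly}(\bz)$ is identically $0$ along the putative limit, not $1$; and ``$\bz^{t\bv}$ tends to $1$'' has no meaning for formal rational functions without specifying an evaluation. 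The paper supplies precisely the missing rigor: since $\kegel_\bv$ depends only on edge directions, $\sum_\bv \sigma_{\kegel_\bv}(\bz)$ is translation invariant, so one may first translate $\poly$ to a rational polytope $\qoly$ with barycenter at the origin and then shrink it until $(\bv+\kegel_\bv)\cap\ZZ^d = \kegel_\bv\cap\ZZ^d$ for every vertex, which forces $\qoly\cap\ZZ^d = \{\bzero\}$; Brion's theorem applied to this $\qoly$ then gives $1 = \sum_\bv \sigma_{\kegel_\bv}(\bz)$ exactly. Your argument needs this (or an equivalent) replacement for the heuristic limit.
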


Chapoton~\cite{chapoton} also proved the following combinatorial reciprocity theorem, which lifts
the classic Ehrhart--Macdonald reciprocity theorem \cite{macdonald} into the $q$-world. We will
show that it easily follows from Theorem~\ref{thm:firstthm}.

\begin{theorem}[Chapoton]\label{thm:chapotonrec}
If $\poly$ is a lattice polytope and $\lambda$ is an integral form that is generic and positive with
respect to $\poly$, then
\[ (-1)^{ \dim \poly } \chap_\poly^\lambda \left( \tfrac 1 q, [-t]_{ \frac 1 q} \right) \, = \,
\chap_{\poly^\circ}^\lambda \left( q, [t]_q \right) . \]
Equivalently,
\[
  (-1)^{ \dim \poly } \chap_\poly^\lambda \left( \tfrac 1 q, -qx \right) \, = \, \chap_{\poly^\circ}^\lambda(q,x) \, .
\]
\end{theorem}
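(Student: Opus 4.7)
The plan is to deduce the theorem from an interior analogue of Theorem~\ref{thm:firstthm}, obtained by replacing Brion's closed-polytope identity with its open-polytope version and then comparing the resulting expression against $\chap_\poly^\lambda(1/q, -qx)$ algebraically.

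First, I would note that the two displayed identities in the statement are equivalent: a direct computation yields $[-t]_{ 1/q } = \frac{ q^t - 1 }{ 1/q - 1 } = -q\,[t]_q$, so the substitution $x = [t]_q$ converts the second identity into the first. Hence it is enough to prove the polynomial identity in~$x$.

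Next I would derive the interior counterpart of Theorem~\ref{thm:firstthm}. Brion's theorem for the relative interior gives
\[
  \sigma_{ (t \poly)^\circ }(\bz) \, = \sum_{ \bv \text{ vertex of } \poly } \bz^{ t \bv } \, \sigma_{ \kegel_\bv^\circ }(\bz) ,
\]
and Stanley's reciprocity for rational cones (applied inside $\operatorname{aff}(\poly)$) gives
\[
  \sigma_{ \kegel_\bv^\circ }(\bz) \, = \, (-1)^{ \dim \poly } \, \sigma_{ \kegel_\bv }(1/\bz) .
\]
Specializing at $\bz = (q^{ \lambda_1 }, \ldots, q^{ \lambda_d })$ and using $q^t = (q-1)[t]_q + 1$ (exactly as in the derivation of Theorem~\ref{thm:firstthm}) would then produce both the existence of $\chap_{\poly^\circ}^\lambda(q,x)$ and the explicit formula
\[
  \chap_{ \poly^\circ }^\lambda(q, x) \, = \, (-1)^{ \dim \poly } \sum_{ \bv \text{ vertex of } \poly } \rho_\bv^\lambda(1/q) \, \bigl( (q-1) x + 1 \bigr)^{ \lambda(\bv) } .
\]

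Finally, starting from Theorem~\ref{thm:firstthm} and substituting $q \mapsto 1/q$, $x \mapsto -qx$, the inner bracket simplifies via the one-line identity $(1/q - 1)(-qx) + 1 = (q-1) x + 1$, giving
\[
  \chap_\poly^\lambda \left( 1/q, \, -qx \right) \, = \sum_{ \bv \text{ vertex of } \poly } \rho_\bv^\lambda(1/q) \bigl( (q-1) x + 1 \bigr)^{ \lambda(\bv) } ,
\]
which matches the interior formula up to the prefactor $(-1)^{ \dim \poly }$, proving the theorem. The step that demands the most care is ensuring that the open-polytope Brion identity and the cone reciprocity are applied with the correct sign when $\poly$ is not full-dimensional (so that $(-1)^{ \dim \poly }$ appears rather than $(-1)^d$), and that the substitution $\bz \mapsto 1/\bz$ remains a valid identity of rational functions after specialization at $q^\lambda$ --- the latter being exactly what the genericity of $\lambda$ buys us. Once these bookkeeping points are settled, the remainder is a direct substitution.
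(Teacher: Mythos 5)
Your proposal is correct and follows essentially the same route as the paper: Brion's theorem for the relative interior combined with Stanley's reciprocity for the vertex cones, followed by the substitution identity $\bigl(\tfrac 1q - 1\bigr)(-qx) + 1 = (q-1)x+1$ and $[-t]_{1/q} = -q[t]_q$. The only cosmetic difference is that you package the argument as an explicit interior analogue of Theorem~\ref{thm:firstthm} before comparing, whereas the paper substitutes $q \mapsto \tfrac 1q$ from the outset; the content is identical.
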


Ehrhart counting functions can be considered also for \Def{rational} polytopes, whose vertices
are in $\QQ^d$. Ehrhart~\cite{ehrhartpolynomial} proved that in this case $\ehr_\poly(t)$ is a
\Def{quasipolynomial}, i.e., there exist $p \in \ZZ_{ >0 }$ such that $\ehr_\poly(kp+r)$ is a
polynomial in $k$, for $0 \le r < p$. 
(These polynomials are the \Def{constituents} of the quasipolynomial $\ehr_\poly(t)$.)
Moreover, one can choose $p$ to be the \Def{denominator} of
$\poly$, i.e., the smallest $p$ such that $p \poly$ is a lattice polytope. 
We prove the following rational analogues of Theorems~\ref{thm:chapoton} and~\ref{thm:poles}: 

\begin{theorem}\label{thm:ratlchap}
If $\poly$ is a rational polytope with denominator $p$, and $\lambda$ is an integral form that is generic and positive with respect to $\poly$, then there exist polynomials $\chap_\poly^{\lambda, r}(q,x) \in \QQ(q)[x]$ such that 
\[ \chap_\poly^{\lambda, r} \left(q, [k]_q \right) \, = \, \ehr_\poly^\lambda(q,kp+r) \] for all
integers $k \ge 0$ and all $0 \le r < p$.
The degree of $\chap_\poly^{\lambda, r}(q,x)$ is $\max \{ \lambda(p \bv) : \, \bv \text{ \rm vertex of } \poly \}$.
Each pole of a coefficient of $\chap_\poly^{\lambda, r}(q,x)$ is an $n\th$ root of unity where $n
= |\lambda(g(p(\bw-\bv)))|$ for some adjacent vertices $\bv$ and~$\bw$.
\end{theorem}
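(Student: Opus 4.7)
The plan is to adapt the Brion-theoretic argument behind Theorem~\ref{thm:firstthm} to the rational case by exploiting the decomposition $t = kp + r$. Since $p \poly$ is a lattice polytope, every vertex $\bv$ of $\poly$ satisfies $p \bv \in \ZZ^d$, so $t \bv = k ( p \bv ) + r \bv$ splits into an integral shift plus a rational ``remainder.'' Applying Brion's theorem to the rational polytope $t \poly$, factoring out the lattice shift $\bz^{ k ( p \bv ) }$ from each vertex-cone contribution, and evaluating at $\bz = ( q^{ \lambda_1 }, \ldots, q^{ \lambda_d } )$ yields
\[
  \ehr_\poly^\lambda( q , kp + r ) \, = \sum_{ \bv \text{ vertex of } \poly } q^{ k \lambda ( p \bv ) } \, \rho_\bv^{ \lambda, r }(q) ,
\]
where $\rho_\bv^{ \lambda, r }(q) := \sigma_{ r \bv + \kegel_\bv } \bigl( q^{ \lambda_1 }, \ldots, q^{ \lambda_d } \bigr)$ is the natural rational analogue of~\eqref{eq:vconerat}.

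Next, the identity $q^k = 1 + (q-1) [ k ]_q$ gives $q^{ k \lambda ( p \bv ) } = \bigl( 1 + ( q - 1 ) [ k ]_q \bigr)^{ \lambda ( p \bv ) }$, a polynomial in $[ k ]_q$ of degree $\lambda ( p \bv ) = p \, \lambda ( \bv ) \ge 0$ (by positivity). Hence
\[
  \chap_\poly^{ \lambda, r }( q , x ) \, := \sum_\bv \rho_\bv^{ \lambda, r }( q ) \, \bigl( ( q - 1 ) x + 1 \bigr)^{ \lambda ( p \bv ) }
\]
satisfies the required interpolation property. Genericity forces the maximum of $\lambda ( p \bv )$ over vertices to be attained at a unique vertex---any positive-dimensional face on which $\lambda$ is constant would contain an edge on which $\lambda$ is constant---so the leading term cannot cancel and the degree is as claimed.

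For the pole statement, I would fix for each $\bv$ a triangulation of $\kegel_\bv$ that introduces no new extreme rays, so that every simplicial subcone has its rays among the primitive integer edge generators $g ( p ( \bw - \bv ) )$ for $\bw$ adjacent to~$\bv$. A half-open decomposition then expresses $\sigma_{ r \bv + \kegel_\bv }( \bz )$ as a sum of rational functions with denominators $\prod_j \bigl( 1 - \bz^{ g ( p ( \bw_j - \bv ) ) } \bigr)$, and evaluation produces denominators $\prod_j \bigl( 1 - q^{ \lambda ( g ( p ( \bw_j - \bv ) ) ) } \bigr)$; genericity ensures these exponents are nonzero, and their roots are precisely the $n\th$ roots of unity with $n = | \lambda ( g ( p ( \bw_j - \bv ) ) ) |$. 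The main obstacle I expect is precisely this last step: one must use a ray-preserving triangulation of the vertex cone so that only original edge directions, not triangulation artifacts, appear in the denominators. Once that is in place, the rest of the argument runs in close parallel to the lattice setting.
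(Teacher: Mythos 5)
Your proposal is correct and follows essentially the same route as the paper: apply Brion's theorem to $(kp+r)\poly$, split $(kp+r)\bv$ into the lattice shift $k(p\bv)$ plus the remainder $r\bv$, absorb $q^{k\lambda(p\bv)}$ via~\eqref{eq:qpowers}, and transfer the pole analysis of Theorem~\ref{thm:poles} to the shifted cones $r\bv+\kegel_\bv$, whose generators are still the primitive edge directions $g(p(\bw-\bv))$. Your added remark justifying the degree claim via uniqueness of the $\lambda$-maximizing vertex is a welcome bit of extra care that the paper leaves implicit.
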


Corollaries~\ref{cor:ttoinfinity} and~\ref{cor:chappolconstantterm} take on the following forms when
$\poly$ is rational.

\begin{corollary}
If $\poly$ is a rational polytope with denominator $p$, and $\lambda$ is an integral form which
is generic and positive with respect to $\poly$, then for any $0 \le r < p$,
\[
  \chap_\poly^{\lambda, r} \left( q, \frac{ 1 }{ 1-q } \right) 
  \, = \begin{cases}
    \rho_\bzero^\lambda(q) & \text{ if } \bzero \text{ is a vertex of } \poly, \\
    0 & \text{ otherwise. }
  \end{cases}
\]
\end{corollary}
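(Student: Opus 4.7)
The plan is to mirror the proof of Corollary~\ref{cor:ttoinfinity}, working one level up with a rational analogue of Theorem~\ref{thm:firstthm} that I expect to fall out of the proof of Theorem~\ref{thm:ratlchap}. Concretely, for each $0 \le r < p$, I expect an identity of the shape
\[
  \chap_\poly^{\lambda, r}(q, x) \, = \sum_{\bv \text{ vertex of } \poly} \rho_\bv^{\lambda, r}(q) \bigl( (q-1) x + 1 \bigr)^{p \lambda(\bv)} ,
\]
where $\rho_\bv^{\lambda, r}(q) := \sigma_{ r\bv + \kegel_\bv }(q^{\lambda_1}, \dots, q^{\lambda_d})$ is the integer-point transform of the shifted vertex cone, specialized at $q^\lambda$.

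Such an expansion comes directly from Brion applied to the rational dilate $(kp+r)\poly$: the vertex at $(kp+r)\bv = kp\bv + r\bv$ has vertex cone $(kp+r)\bv + \kegel_\bv$, whose integer-point transform factors as $\bz^{kp\bv} \, \sigma_{r\bv + \kegel_\bv}(\bz)$ because $kp\bv \in \ZZ^d$. Specializing $\bz_i = q^{\lambda_i}$ and summing over vertices yields
\[
  \ehr_\poly^\lambda(q, kp+r) \, = \sum_{\bv} q^{k p \lambda(\bv)} \, \rho_\bv^{\lambda, r}(q) ,
\]
and the identity $q^{kp\lambda(\bv)} = \bigl((q-1)[k]_q + 1\bigr)^{p\lambda(\bv)}$ rewrites the right-hand side as a polynomial in $x = [k]_q$, which by uniqueness must be $\chap_\poly^{\lambda, r}(q,x)$.

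With this expansion in hand, the evaluation at $x = \frac{1}{1-q}$ is immediate: $(q-1) \cdot \frac{1}{1-q} + 1 = 0$, so every vertex with $p \lambda(\bv) > 0$ drops out. Since $p \ge 1$ and $\lambda(\bv) \ge 0$, only vertices with $\lambda(\bv) = 0$ survive. If $\bzero$ is a vertex of $\poly$, then $\bv = \bzero$ contributes $\rho_\bzero^{\lambda, r}(q)$; because $r \bzero = \bzero \in \ZZ^d$ the shift is trivial, and so $\rho_\bzero^{\lambda, r}(q) = \sigma_{\kegel_\bzero}(q^\lambda) = \rho_\bzero^\lambda(q)$, independent of $r$, matching the stated answer. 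If $\bzero$ is not a vertex, no term survives and the sum is $0$.

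The main obstacle, which I expect to be dispatched inside the proof of Theorem~\ref{thm:ratlchap} rather than in the present corollary, is setting up the rational Brion expansion cleanly: verifying that each $\rho_\bv^{\lambda, r}(q)$ is a well-defined rational function (equivalently, that Chapoton's genericity condition for $\lambda$ rules out the denominator vanishings arising from the scaled polytope $p \poly$), and handling non-simple vertices via a triangulation of their tangent cones. Once that infrastructure is in place, this corollary reduces to the same one-line substitution argument as its lattice counterpart.
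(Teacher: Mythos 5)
Your proposal is correct and follows the paper's own route: the rational Brion expansion you posit is exactly the formula for $\chap_\poly^{\lambda, r}(q,x)$ derived in the proof of Theorem~\ref{thm:ratlchap} (with $\sigma_{r\bv+\kegel_\bv}$ playing the role of your $\rho_\bv^{\lambda,r}$), and the corollary then follows from the same substitution $(q-1)\cdot\frac{1}{1-q}+1=0$ used for Corollary~\ref{cor:ttoinfinity}. The paper gives no separate proof of this corollary, treating it as immediate from that expansion, just as you do.
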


\begin{corollary}\label{cor:quasiconstantterm}
If $\poly$ is a rational polytope and $\lambda$ is an integral form that is generic and positive with
respect to $\poly$, then the constant term of $\chap_\poly^{\lambda, 0} (q,x)$ is $1$.
\end{corollary}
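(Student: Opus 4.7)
The plan is to evaluate $\chap_\poly^{\lambda,0}(q,x)$ at $x=0$ and recognize the result as $\ehr_\poly^\lambda(q,0)$, which is trivially equal to $1$. Since $\chap_\poly^{\lambda,0}(q,x)$ is a polynomial in $x$ over $\QQ(q)$, its constant term is simply $\chap_\poly^{\lambda,0}(q,0)$, so the task reduces to computing this value.

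First I would observe that $[0]_q = \frac{q^0-1}{q-1} = 0$, so the substitution $x = 0$ in $\chap_\poly^{\lambda,0}(q,x)$ is the specialization $x = [k]_q$ with $k = 0$. Second, I would apply Theorem~\ref{thm:ratlchap} with the choices $r = 0$ and $k = 0$, which are both within the allowed range (the theorem covers all $k \ge 0$ and all $0 \le r < p$). This gives
\[
  \chap_\poly^{\lambda,0}(q,0) \, = \, \chap_\poly^{\lambda,0}(q, [0]_q) \, = \, \ehr_\poly^\lambda(q, 0 \cdot p + 0) \, = \, \ehr_\poly^\lambda(q,0) \, .
\]
Finally, since $0 \cdot \poly = \{\bzero\}$, we have $\ehr_\poly^\lambda(q,0) = \sum_{\bm \in \{\bzero\} \cap \ZZ^d} q^{\lambda(\bm)} = q^{\lambda(\bzero)} = 1$, and the claim follows.

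The reason the argument works for the constituent $r=0$ (and not, say, for $r > 0$) is precisely that the dilation index $kp + r$ vanishes only when $k = r = 0$; in the other constituents the analogous substitution lands at $t = r > 0$, where $\ehr_\poly^\lambda(q,r)$ generally differs from $1$. There is essentially no obstacle here: the statement is a one-line consequence of the interpolation identity in Theorem~\ref{thm:ratlchap} together with the trivial evaluation $0 \cdot \poly = \{\bzero\}$, and mirrors the argument behind Corollary~\ref{cor:chappolconstantterm} in the lattice setting.
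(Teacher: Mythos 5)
Your argument reduces the corollary to the instance $k=r=0$ of Theorem~\ref{thm:ratlchap}, and that is exactly where the gap lies: that instance \emph{is} the statement to be proven, so the deduction is circular in substance. Concretely, the proof of Theorem~\ref{thm:ratlchap} runs Brion's Theorem~\ref{thm:brion} on the dilate $(kp+r)\poly$, which identifies $\ehr_\poly^\lambda(q,kp+r)$ with $\sum_{\bv} q^{k\lambda(p\bv)}\sigma_{r\bv+\kegel_\bv}(q^{\lambda_1},\dots,q^{\lambda_d})$ only when $kp+r\ge 1$, because only then are the vertex cones of $(kp+r)\poly$ the translates $(kp+r)\bv+\kegel_\bv$. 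When $kp+r=0$ the dilate degenerates to the single point $\bzero$, whose vertex-cone decomposition is the trivial identity $1=1$ and says nothing about the cones $\kegel_\bv$ of $\poly$. So the interpolation identity at $k=r=0$ — equivalently, the claim $\chap_\poly^{\lambda,0}(q,0)=\sum_{\bv}\rho_\bv^\lambda(q)=1$ — does not come for free from the theorem's proof; it is precisely the content of the corollary. (This is also why Chapoton's Theorem~\ref{thm:chapoton} and Theorem~\ref{thm:firstthm} are stated only for \emph{positive} $t$, and why the paper does \emph{not} prove the lattice-case Corollary~\ref{cor:chappolconstantterm} by evaluating at $t=0$, contrary to your closing remark that your argument ``mirrors'' that proof.)

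What is actually needed, and what the paper supplies in the proof of Corollary~\ref{cor:chappolconstantterm}, is an independent proof of the rational-function identity $\sum_{\bv}\sigma_{\kegel_\bv}(\bz)=1$: translate $\poly$ so its barycenter is the origin and shrink it to a rational polytope $\qoly$ small enough that $(\bv+\kegel_\bv)\cap\ZZ^d=\kegel_\bv\cap\ZZ^d$ at every vertex, so that $\qoly\cap\ZZ^d=\{\bzero\}$; Brion applied to $\qoly$ (a genuine, non-degenerate polytope with the same vertex cones) then yields $1=\sum_\bv\sigma_{\kegel_\bv}(\bz)$, and specializing $\bz=(q^{\lambda_1},\dots,q^{\lambda_d})$ gives the constant term $\sum_\bv\rho_\bv^\lambda(q)=1$ of $\chap_\poly^{\lambda,0}(q,x)=\sum_\bv\sigma_{0\cdot\bv+\kegel_\bv}(q^{\lambda_1},\dots,q^{\lambda_d})\bigl((q-1)x+1\bigr)^{\lambda(p\bv)}$. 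Your observation that $\ehr_\poly^\lambda(q,0)=1$ is correct but carries no weight until the interpolation identity is shown to extend to $t=0$, which is the missing step.
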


Finally, we offer the following rational version of Theorem~\ref{thm:chapotonrec}.

\begin{theorem}\label{thm:rationalchapotonrec}
If $\poly$ is a rational polytope with denominator $p$, and $\lambda$ is an integral form which
is generic and positive with respect to $\poly$, then for any $0 \le r < p$ and $k > 0$,
\[
  (-1)^{ \dim \poly } \chap_\poly^{\lambda, r} \left( \tfrac 1 q, [-k]_{ \frac 1 q} \right) \, = \,
\ehr_{\poly^\circ}^\lambda(q, kp-r) \, .
\]
\end{theorem}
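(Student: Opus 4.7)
The plan is to mirror the strategy used for Theorem~\ref{thm:chapotonrec}, now starting from the rational-polytope analogue of Theorem~\ref{thm:firstthm} that must underlie Theorem~\ref{thm:ratlchap}. Applying Brion's theorem to $(kp+r)\poly$, and using $p\bv \in \ZZ^d$ to pull the shift $kp\bv$ off of the vertex cones, one obtains
\begin{equation*}
\chap_\poly^{\lambda, r}(q,x) \, = \sum_{\bv} \sigma_{r\bv + \kegel_\bv}(q^{\lambda_1},\dots,q^{\lambda_d})\, \bigl((q-1)x+1\bigr)^{p\lambda(\bv)},
\end{equation*}
since $q^{kp\lambda(\bv)} = \bigl((q-1)[k]_q+1\bigr)^{p\lambda(\bv)}$. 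This is the identity I would take as the starting point.

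Next I would establish the rationally shifted cone reciprocity
\begin{equation*}
\sigma_{\bv + \kegel_\bv}(1/\bz) \, = \, (-1)^{\dim \poly}\, \sigma_{-\bv + \kegel_\bv^\circ}(\bz),
\end{equation*}
by extending the standard simplicial-cone argument: in the half-open parallelepiped decomposition of $\kegel_\bv$, the involution $\bp \mapsto \sum \bw_i - \bp$ sends the integrality condition $\bv + \bp \in \ZZ^d$ to the condition $-\bv + \bp' \in \ZZ^d$, which is exactly what reverses the sign of the shift. Substituting $q \mapsto 1/q$ in the expression for $\chap_\poly^{\lambda,r}$ and applying this reciprocity vertex-by-vertex yields
\begin{equation*}
(-1)^{\dim\poly}\chap_\poly^{\lambda, r}(1/q, x) \, = \sum_{\bv} \sigma_{-r\bv + \kegel_\bv^\circ}(q^{\lambda_1},\dots,q^{\lambda_d})\, \bigl((1/q-1)x+1\bigr)^{p\lambda(\bv)}.
\end{equation*}

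To finish, I would set $x = [-k]_{1/q}$ and compute $(1/q-1)[-k]_{1/q}+1 = q^k$ directly from the definition of the $q$-integer, so the bracketed factor becomes $q^{kp\lambda(\bv)}$. On the other hand, Brion's theorem for open polytopes applied to $(kp-r)\poly^\circ$ yields
\begin{equation*}
\ehr_{\poly^\circ}^\lambda(q, kp-r) \, = \sum_\bv \sigma_{(kp-r)\bv + \kegel_\bv^\circ}(q^{\lambda_1},\dots,q^{\lambda_d}) \, = \sum_\bv q^{kp\lambda(\bv)}\, \sigma_{-r\bv + \kegel_\bv^\circ}(q^{\lambda_1},\dots,q^{\lambda_d}),
\end{equation*}
again using $kp\bv \in \ZZ^d$. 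The two expressions agree term-by-term, completing the proof.

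The delicate step is the rational-shift cone reciprocity: the shift $\bv$ in $\bv+\kegel_\bv$ must flip to $-\bv$ in the reciprocated expression (not remain $\bv$), because in the parallelepiped involution the integrality of $\bv+\bp$ translates into the integrality of $-\bv + \bp'$. This sign flip is exactly the mechanism by which the constituent index $r$ on the left becomes the shift $kp-r$ on the right, rather than $kp+r$; any attempt to avoid this step will fail to produce the correct dilation factor for $\poly^\circ$.
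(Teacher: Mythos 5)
Your proof is correct and takes essentially the same route as the paper: both rest on Brion's theorem for $\poly^\circ$ together with the shifted variant of Stanley reciprocity $\sigma_{\bv+\kegel}(\bz) = (-1)^{\dim\kegel}\sigma_{-\bv+\kegel^\circ}(1/\bz)$ applied vertex cone by vertex cone, and your identification of the sign flip $r \mapsto -r$ as the mechanism producing the dilation $kp-r$ is exactly the point of the paper's argument. The only cosmetic differences are that you argue from $\chap_\poly^{\lambda,r}$ toward $\ehr_{\poly^\circ}^\lambda$ (the paper goes the other way, working at $1/q$), and that you sketch the shifted reciprocity via the half-open parallelepiped involution where the paper cites it.
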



\section{Connections}\label{sec:connections}

We now develop both a toolbox of examples of Chapoton polynomials and give a glimpse of history
where they had previously appeared (naturally with different nomenclature).
For these examples, as well as the proofs in Section~\ref{sec:proofs}, we make the simple but crucial observation that, by definition of~$[t]_q$, 
\begin{equation}\label{eq:qpowers}
  q^{ kt } \, = \, \bigl( (q-1) [t]_q + 1 \bigr)^k ,
\end{equation}
and so $q^{ kt }$ can be expressed as a degree-$k$ polynomial in $[t]_q$ with coefficients in $\QQ[q]$.

\subsection{The unit cube and Euler--Mahonian statistics}

Let $\Box = [0,1]^d$ and $\lambda = (1, 1, \dots, 1)$. Then
\begin{equation}\label{eq:unitcubechap}
  \ehr_\Box^\lambda(q,t) \, = \, [t+1]_q^d
  \qquad \text{ and so } \qquad
  \chap_\Box^\lambda(q,x) \, = \, (1 + q x)^d .
\end{equation}
The quantity $[t+1]_q^d$ famously appears in the \emph{Carlitz identity}
~\cite{carlitzeulerian} (though with some effort one can derive it from the works of MacMahon \cite[Volume 2, Chapter IV, \S462]{macmahon})
\begin{equation}\label{eq:carlitz}
  \sum_{ t \ge 0 } [t+1]_q^n \, x^t
  \, = \, \frac{ \sum_{ \pi \in S_n } x^{ \des (\pi) } q^{ \maj (\pi) } }{ \prod_{ j=0 }^n \left( 1 - x q^j \right) } \, .
\end{equation}
The numerator on the right is known as an \emph{Euler--Mahonian polynomial} due to the relation
with Euler's work on $\des(\pi) := |\Des(\pi)|$ and MacMahon's original introduction of the major index
$\maj(\pi) := \sum_{ j \in \Des(\pi) } j$
for a permutation
$\pi \in S_n$, where $\Des(\pi) := \{ j : \, \pi(j) > \pi(j+1) \}$.
We remark that~\cite{tielker} introduced $q$-weighted Ehrhart polynomials beyond those defined by Chapoton to
derive various extensions of~\eqref{eq:carlitz}, in particular involving signed permutations.

The vertices of \(\Box\) are of the form \(\be_{I} := \sum_{j \in I}\be_j\), where \(I \subseteq [d] := \{ 1, 2,
\dots, d \} \) and
\(\be_j\) is the \(j\)th standard unit vector. Each vertex lies in \(d\) edges; if \(j \in I\)
then \(-\be_j\) is an edge direction at $\be_I$, otherwise \(\be_j\) is an edge direction. Since each vertex cone of \(\Box\) is unimodular, 
\[
  \rho_{ \be_I }^\lambda(q)
  \, = \, \frac{ 1 }{ (1-q)^{ d-|I| } ( 1 - \frac 1 q )^{ |I| } } 
  \, = \, \frac{ (-q)^{ |I| } }{ (1-q)^d } 
\]
and so Theorem~\ref{thm:firstthm} gives
\[
  \chap_\Box^\lambda(q,x) = \sum_{j = 0}^{d} \left( d \atop j \right) \frac{(-q)^j}{\left(1 - q  \right)^{d}}
  \bigl( (q-1) x + 1 \bigr)^{j}.
\]
One can apply the binomial theorem to recover the arguably simpler expression in~\eqref{eq:unitcubechap}.

For general $\lambda > 0$,
\[
  \ehr_\Box^\lambda(q,t)
  \, = \, \prod_{ j=1 }^d [t+1]_{ q^{ \lambda_j } }
  \, = \, \prod_{ j=1 }^d \frac{ q^{ \lambda_j(t+1) } - 1 }{ q^{ \lambda_j } - 1 } 
  \, = \, \prod_{ j=1 }^d \frac{ q^{ \lambda_j } \left( (q-1) [t]_q + 1 \right)^{ \lambda_j } - 1
}{ q^{ \lambda_j } - 1 }  \, ,
\]
and this function appears in a multivariate refinement of~\eqref{eq:carlitz}~\cite{eulermahonian}.
We can compute its incarnation via Theorem~\ref{thm:firstthm} as above: with
\(\lambda_{I} := \sum_{j \in I}\lambda_j\), we have 
\[
  \chap_\Box^\lambda(q,x) \, = \sum_{I \subseteq [d]} \frac{(-1)^{|I|}q^{\lambda_{I}}}{\prod_{j = 1}^{d}\left(1 - q^{\lambda_j} \right)} \bigl( (q-1) x + 1 \bigr)^{ \lambda_{I} }.
\]

\subsection{The standard simplex and Sylvester waves}

Let $\Delta = \left\{ \bx \in \RR_{ \ge 0 }^d : \, x_1 + x_2 + \dots + x_d = 1 \right\}$, the
convex hull of the $d$ standard unit vectors, and assume that the coefficients of $\lambda$ are
distinct.
Then
\[
  \ehr_\Delta^\lambda(q,t)
  \, = \sum_{ \bm \in t \Delta } q^{ \lambda_1 m_1 + \lambda_2 m_2 + \dots + \lambda_d m_d }
\]
is the generating function of the number of partitions with exactly $t$ parts in the set $\Lambda
:= \{ \lambda_1, \lambda_2, \dots, \lambda_d \}$.
This is a refinement of sorts of the \emph{restricted partition function} $p_\Lambda(n)$, the
number of partitions of $n$ with parts in $\Lambda$. This function is classically decomposed into
\emph{Sylvester waves} and is intimately connected to the \emph{Frobenius coin-exchange problem}
(see, e.g., \cite[Chapter~1]{ccd}).

The vertex cone at $\be_j$ is again unimodular, with edge directions $\be_k - \be_j$, so that
\[
  \rho_{ \be_j }^\lambda(q)
  \, = \, \frac{ 1 }{ \prod_{ k \ne j } \left( 1 - q^{ \lambda_k - \lambda_j } \right) } \, ,
\]
and Theorem~\ref{thm:firstthm} yields 
\[
  \chap_\Delta^\lambda(q,x) \, = \, \sum_{ j=1 }^{ d } \frac{ 1 }{ \prod_{ k \ne j } \left( 1 - q^{ \lambda_k - \lambda_j } \right) } \bigl( (q-1) x + 1 \bigr)^{ \lambda_{j} }.
\]

\subsection{Unimodular simplices and integer partitions}

Let $\Delta = \left\{ \bx \in \RR^d : \, 0 \le x_1 \le x_2 \le \dots \le x_d \le 1 \right\}$,
another unimodular $d$-simplex, and let $\lambda = (1, 1, \dots, 1)$.
Then
\[
  \ehr_\Delta^\lambda(q,t)
  \, = \sum_{ \bm \in t \Delta } q^{ m_1 + m_2 + \dots + m_d }
  \, = \, \left[ {t+d} \atop d \right]_q \, , 
\]
the (very classical) generating function of the number of partitions with at most $d$ parts, each of which $\le t$ (see, e.g.,~\cite{andrewstheoryofpartitions}). Thus
\[
  \chap_\Delta^\lambda(q,x)
  \, = \, \frac{ 1 }{ [d]_q! } \prod_{ j=1 }^d \left( q^j x + [j]_q \right) .
\]

Along with the origin, the vertices of \(\Delta\) are $\be_d, \be_{d - 1} + \be_{d}, \dots, \be_1
+ \dots + \be_{d - 1} + \be_d$. As \(\lambda(\be_{d - k + 1} + \dots + \be_d) = k\), we have
\[
  \rho_{ \be_j }^\lambda(q)
  \, = \, \frac{1}{\prod_{k \neq j}\left( 1 - q^{k - j}
\right)} \, ,
\]
and Theorem~\ref{thm:firstthm} gives 
\[
  \chap_\Delta^\lambda(q,x) \, = \, \sum_{j = 0}^{d}\frac{1}{\prod_{k \neq j}\left( 1 - q^{k - j}
\right)}\bigl((q - 1)x + 1 \bigr)^{j}. 
\]

\subsection{Lecture hall simplices/partitions}
Our last family of examples is given by the \Def{lecture hall simplex}
\[
  \Delta_n \, := \, \left\{ \bx \in [0,1]^n : \, x_1 \le \frac{ x_2 }{ 2 } \le \frac{ x_3 }{ 3 } \le \dots \le \frac{ x_n }{ n } \right\} ,
\]
first studied in~\cite{corteelleesavage} in
conjunction with the famous \Def{lecture hall partitions}, which we may view as points in
the cone
\[
  \kegel_n \, := \, \left\{ \bx \in \RR_{ \ge 0 }^n : \, x_1 \le \frac{ x_2 }{ 2 } \le \frac{ x_3 }{ 3 } \le \dots \le \frac{ x_n }{ n } \right\}
\]
---a lecture hall partition of $m$ is an integer point in $\kegel_n$ (where $n$ equals the number
of parts of the partition) whose coordinates sum to~$m$.
Lecture hall partition first saw the light of day in~\cite{BME1} and have been the subject of
active research; \cite{savagesurvey} gives a survey.

The lecture hall cone $\kegel_n$ is the vertex cone of $\Delta_n$ at the origin.
With $\lambda = (1, 1, \dots, 1)$, the rational function $\rho_\bzero^\lambda(q)$ (in the
language of \eqref{eq:vconerat}) is precisely the rational generating function of all lecture
hall partitions with $\le n$ parts.

The lecture hall simplex $\Delta_n$ is rational (with denominator $n$), and so the recursion
\begin{equation}\label{lhrecursion}
  \ehr_{ \Delta_n }^\lambda(q,jn+i) \, = \, \ehr_{ \Delta_n }^\lambda(q,jn+i-1) + q^{ jn+i }
\ehr_{ \Delta_{n-1} }^\lambda(q,j(n-1)+i-1) \, ,
\end{equation}
for any $j \ge 0$ and $1 \le i \le n$,
which follows from first principles~\cite{corteelleesavage}, might historically be the earliest
occurrence of a $q$-Ehrhart \emph{quasi}polynomial.
In terms of the Chapoton constituents, the recursion can be translated as follows.
Define $\chap_{ n, i } (x) \in \QQ(q)[x]$ via
\[
  \chap_{ n,i } \left( [j]_q \right) \, = \, \ehr_{ \Delta_n }^\lambda(q,jn+i)
\]
for $j \ge 0$ and $1 \le i \le n$ (noting that \(\chap_{n, i}(x)\) here is \(\chap_{\Delta_n}^{\lambda, i}(x)\) in Theorem~\ref{thm:ratlchap}), and define $\chap_{ n, 0 } (x) \in \QQ(q)[x]$ via
\begin{equation}\label{eq:lhchapsymm}
  \chap_{ n, n } (x) \, = \, \chap_{ n, 0 } (1+qx) \, ;
\end{equation}
this last definition is motivated by
\[
  \chap_{ n,n } \left( [j]_q \right)
  \, = \, \ehr_{ \Delta_n }^\lambda(q,jn+n)
  \, = \, \ehr_{ \Delta_n }^\lambda(q,(j+1)n)
  \, = \, \chap_{ n,0 } \left( [j+1]_q \right) .
\]
Thus we can start with
\begin{equation}\label{eq:lhchapstart}
  \chap_{ 1, 0 } (x) \, = \, 1+qx
  \qquad \text{ and } \qquad
  \chap_{ 1, 1 } (x) \, = \, 1+q+q^2x \, ,
\end{equation}
and \eqref{lhrecursion} yields
\[
  \chap_{ n, i } (x) \, = \, \chap_{ n, i-1 } (x) + q^i \bigl( (q-1)x + 1 \bigr)^n \chap_{ n-1, i-1 } (x)
\]
for $n \ge 2$ and $1 \le i \le n$.
Iterating this relation gives
\begin{equation}\label{eq:lhchaprec}
  \chap_{ n, i } (x) \, = \, \chap_{ n, 0 } (x) + \bigl( (q-1)x + 1 \bigr)^n \sum_{ j=1 }^{ i } q^j \, \chap_{ n-1, j-1 } (x)
\end{equation}
and so, in particular, via~\eqref{eq:lhchapsymm},
\begin{equation}\label{eq:qdiffequ}
  \chap_{ n, 0 } (1+qx) - \chap_{ n, 0 } (x) \, = \, \bigl( (q-1)x + 1 \bigr)^n \sum_{ j=0 }^{ n-1 }
q^{j+1} \, \chap_{ n-1, j } (x) \, .
\end{equation}
Connaisseurs of $q$-calculus will realize~\eqref{eq:qdiffequ} as a $q$-difference equation, and so
\eqref{eq:lhchapstart}, \eqref{eq:lhchaprec}, and~\eqref{eq:qdiffequ}
completely determine the family of polynomials $\chap_{ n, i } (x)$ up to a constant, but the latter is
determined by
\begin{equation}\label{}
  \chap_{ n, 0 } (0) \, = \, 1 \, , 
\end{equation}
which holds by Corollary~\ref{cor:quasiconstantterm}.
The determination of the actual polynomials seems subtle;
we note that they were not computed in closed form in~\cite{corteelleesavage}.

We conclude this section by remarking that, by Corollary~\ref{cor:ttoinfinity}, $\chap_{ n,0 } ( \frac{ 1 }{ 1-q })$ gives yet another incarnation of the rational generating function of all lecture hall partitions with $\le n$ parts.


\section{Proofs}\label{sec:proofs}

Our main ingredient is the following~\cite{brion}.

\begin{theorem}[Brion]\label{thm:brion}
If $\poly$ is a rational polytope, we have the following identity of rational functions:
\[
  \sigma_\poly(\bz) \, = \sum_{ \bv \text{ \rm vertex of } \poly } \sigma_{ \bv + \kegel_\bv } (\bz) \, .
\]
\end{theorem}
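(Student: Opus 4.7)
The plan is to deduce Brion's identity from two classical ingredients: the Brianchon--Gram decomposition of the indicator function of $\poly$ as an alternating sum over tangent cones at all faces, together with a vanishing principle saying that the integer-point transform of any rational cone containing an affine line is zero as a rational function.

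The first step I would carry out is a careful setup of the valuation framework. The series $\sum_{\bm \in S \cap \ZZ^d} \bz^\bm$ defining $\sigma_S(\bz)$ converges absolutely only for $\bz$ in a region determined by the recession cone of $S$, so to interpret Brion's identity literally as an identity of rational functions I would appeal to the result of Lawrence and Khovanskii--Pukhlikov that the assignment $S \mapsto \sigma_S$ extends uniquely to an additive valuation from the $\QQ$-algebra generated by indicator functions of rational polyhedra into $\QQ(z_1, \dots, z_d)$; on a pointed rational cone $\bv + \kegel$ this extension agrees with the rational function obtained by formally summing the geometric series.

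Next I would invoke the Brianchon--Gram identity
\[
  [\poly] \, = \sum_{ F \text{ face of } \poly } (-1)^{ \dim F } \, [\tangent_F] ,
\]
where $\tangent_F$ denotes the tangent cone of $\poly$ along $F$, and push this through the valuation $\sigma$ to obtain $\sigma_\poly(\bz) = \sum_F (-1)^{\dim F} \, \sigma_{\tangent_F}(\bz)$. The key geometric input is the line lemma: if $C$ is a closed rational cone containing a line, then choosing a primitive lattice vector $\mathbf{u}$ parallel to it yields $C + \mathbf{u} = C$, hence $\bz^\mathbf{u} \, \sigma_C(\bz) = \sigma_C(\bz)$ in $\QQ(\bz)$, which forces $\sigma_C(\bz) = 0$. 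For every face $F$ with $\dim F > 0$, the tangent cone $\tangent_F$ contains the affine span of $F$ and hence a line, so its integer-point transform vanishes; only the vertex contributions $\tangent_\bv = \bv + \kegel_\bv$ survive, yielding Brion's identity.

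The main obstacle I anticipate is the valuation setup in the first step: making rigorous the claim that $\sigma$ extends to a well-defined additive function on \emph{all} rational polyhedra (not just bounded ones) in a way compatible with arbitrary inclusion--exclusion decompositions. Once that framework is in place, Brianchon--Gram and the line lemma combine cleanly, and the remainder is bookkeeping. A pleasant feature of this proof is that it makes transparent why Theorem~\ref{thm:firstthm} should hold at all: specializing $\bz = (q^{\lambda_1}, \dots, q^{\lambda_d})$ converts each vertex-cone term on the right-hand side into $\rho_\bv^\lambda(q) \cdot q^{t\,\lambda(\bv)}$, and identity~\eqref{eq:qpowers} rewrites each $q^{t\,\lambda(\bv)}$ as a polynomial in $[t]_q$.
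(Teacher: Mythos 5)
The paper does not prove this statement at all: Brion's Theorem is quoted as a black box from the cited reference \cite{brion} and used as the main ingredient for everything else, so there is no in-paper proof to compare against. Your proposal is a correct outline of what is by now the standard proof of Brion's theorem (it is essentially the argument in Beck--Robins and in Barvinok's book): extend $S \mapsto \sigma_S$ to a valuation on rational polyhedra valued in $\QQ(\bz)$ via Lawrence/Khovanskii--Pukhlikov, apply Brianchon--Gram
\[
  [\poly] \, = \sum_{ F \text{ face of } \poly } (-1)^{ \dim F } \, [\tangent_F] \, ,
\]
and kill every tangent cone at a positive-dimensional face with the line lemma, since such a cone contains the affine span of $F$ and translation-invariance of the valuation under a primitive lattice vector $\mathbf{u}$ along that line forces $\bz^{\mathbf{u}} \sigma_{\tangent_F} = \sigma_{\tangent_F}$, hence $\sigma_{\tangent_F} = 0$; the surviving terms are exactly the vertex cones $\tangent_\bv = \bv + \kegel_\bv$ with sign $+1$. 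The one place where you are candid about relying on a citation --- the existence and uniqueness of the valuation extension to unbounded rational polyhedra compatible with inclusion--exclusion --- is genuinely the technical heart of the matter, but delegating it to the Lawrence/Khovanskii--Pukhlikov theorem is legitimate and is how most modern expositions proceed. Your closing observation, that specializing $\bz = (q^{\lambda_1}, \dots, q^{\lambda_d})$ in the vertex-cone decomposition immediately yields Theorem~\ref{thm:firstthm} via \eqref{eq:qpowers}, matches exactly how the paper actually uses Brion's theorem in Section~\ref{sec:proofs}.
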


We are ready to prove our refinement of Chapoton's Theorem.

\begin{proof}[Proof of Theorem~\ref{thm:firstthm}]
By Brion's Theorem~\ref{thm:brion},
\begin{align}
  \ehr_\poly^\lambda(q,t)
  \, &= \sum_{ \bm \in t \poly } q^{ \lambda(\bm) } 
  \, = \, \sigma_{ t \poly } \left( q^{ \lambda_1 } , q^{ \lambda_2 } , \dots, q^{ \lambda_d } \right)
  \, = \sum_{ \bv \text{ \rm vertex of } \poly } \sigma_{ t \bv + \kegel_\bv } \left( q^{ \lambda_1 } , q^{ \lambda_2 } , \dots, q^{ \lambda_d } \right) \nonumber \\
  \, &= \sum_{ \bv \text{ \rm vertex of } \poly } q^{ t \lambda(\bv) } \, \sigma_{ \kegel_\bv } \left( q^{ \lambda_1 } , q^{ \lambda_2 } , \dots, q^{ \lambda_d } \right) . \label{eq:vertexconespecialization}
\end{align}
Now use~\eqref{eq:qpowers}.
\end{proof}

\begin{example}\label{ex:brioncomp}
We will now compute the Chapoton polynomial of Example~\ref{ex:triangle} via Theorem~\ref{thm:firstthm}.
Since each vertex cone of $\Delta$ is unimodular,
\begin{align}
  \sigma_{ \kegel_{ (0,0) } } \left(q,q^2 \right) \, &= \, \frac{ 1 }{ \left(1-q \right) \left(1-q^2 \right) } \nonumber \\
  \sigma_{ \kegel_{ (1,0) } } \left(q,q^2 \right) \, &= \, \frac{ 1 }{ \left( 1 - \frac 1 q \right) \left(1-q \right) } 
     \, = \, - \, \frac{ q }{ (1-q)^2 } \label{eq:samplegenfcts} \\
  \sigma_{ \kegel_{ (0,1) } } \left( q,q^2 \right) \, &= \, \frac{ 1 }{ \left( 1 - \frac 1 {q^2} \right) \left( 1 - \frac 1 q \right) } 
     \, = \, \frac{ q^3 }{ \left(1-q \right) \left(1-q^2 \right) } \, , \nonumber
\end{align}
and so Theorem~\ref{thm:firstthm} yields
\begin{align*}
  \ehr_\Delta^\lambda(q,t)
  \, &= \, \frac{ 1 }{ \left(1-q \right) \left(1-q^2 \right) } - \frac{ q }{ (1-q)^2 } \bigl( (q-1) [t]_q + 1
\bigr) + \frac{ q^3 }{ \left(1-q \right) \left(1-q^2 \right) } \bigl( (q-1) [t]_q + 1 \bigr)^2 \\
  \, &= \, \frac{ q^3 }{ 1+q } \, [t]_q^2 + \frac{ q + 2q^2 }{ 1+q } \, [t]_q + 1 \, .
\end{align*}
In this example, we can also see the following proof in action when looking at the denominators
of the vertex cone generating functions in~\eqref{eq:samplegenfcts}.
\end{example}

\begin{proof}[Proof of Theorem~\ref{thm:poles}]
It follows from first principles (see, e.g., \cite[Section~3.3]{ccd}) that the rational-function
form of $\sigma_\kegel(\bz)$, where $\kegel$ is a \emph{simplicial} rational cone (i.e., $\kegel$
has dimension many generators), has denominator 
$\prod_{ \bg } (1 - \bz^\bg)$ where the product is taken over all generators of $\kegel$.
Because every cone can be triangulated without introducing new generators, the same is true for a
general rational cone~$\kegel$.

The generators of the vertex cone $\kegel_\bv$ of a given polytope are $g(\bw - \bv)$ for all
vertices $\bw$ that are adjacent to~$\bv$.
Thus specializing the integer-point transform of $\kegel_\bv$ according to
\eqref{eq:vertexconespecialization} yields the denominator of
$
  \sigma_{ \kegel_\bv } \left( q^{ \lambda_1 } , q^{ \lambda_2 } , \dots, q^{ \lambda_d } \right)
$
as
\[
  \prod_{ \bw \text{ adjacent to } \bv } \left( 1 - q^{ \lambda(g(\bw-\bv)) } \right) . \qedhere
\]
\end{proof}

\begin{proof}[Proof of Corollary~\ref{cor:chappolconstantterm}]
Translate the given lattice polytope $\poly \subset \RR^d$ to the (rational) polytope $\qoly$ so that the
barycenter of $\qoly$ is the origin; note that two corresponding vertices of $\poly$ and $\qoly$
come with identical cones $\kegel_\bv$.
Because $\qoly$ is a rational, we can further shrink it such that for each of its vertices,
\[
  \left( \bv + \kegel_\bv \right) \cap \ZZ^d \, = \, \kegel_\bv \cap \ZZ^d .
\]
Note that this implies $\qoly \cap \ZZ^d = \{ \bzero \}$, and thus Brion's Theorem~\ref{thm:brion} applied to
$\qoly$ says
\[
  1
  \, = \sum_{ \bv \text{ vertex of } \qoly } \sigma_{ \bv + \kegel_\bv } (\bz) 
  \, = \sum_{ \bv \text{ vertex of } \qoly } \sigma_{ \kegel_\bv } (\bz) 
  \, = \sum_{ \bv \text{ vertex of } \poly } \sigma_{ \kegel_\bv } (\bz) \, .
\]
On the other hand, by Corollary~\ref{cor:chappolexpresssion}, the constant term of
$\chap_\poly^\lambda(q,x)$ equals
\[
  \sum_{ \bv \text{ vertex of } \poly } \rho_\bv^\lambda(q)
  \, = \sum_{ \bv \text{ vertex of } \poly } \sigma_{ \kegel_\bv } \left( q^{ \lambda_1 } , \ q^{ \lambda_2 } , \ \dots, \ q^{ \lambda_d } \right)
  \, = \, 1 \, . \qedhere
\]
\end{proof}

For our proof of Theorem~\ref{thm:chapotonrec}, we will need to following reciprocity
theorem~\cite{stanleyreciprocity}. We denote the relative interior of $\kegel$ as $\kegel^\circ$
and write $\frac 1 \bz := (\frac{ 1 }{ z_1 }, \frac{ 1 }{ z_2 }, \dots, \frac{ 1 }{ z_d })$.

\begin{theorem}[Stanley]\label{thm:stanleyrec}
Let $\kegel \subseteq \RR^d$ be a rational cone. Then, as rational functions,
\[
  (-1)^{ \dim \kegel } \sigma_\kegel \left( \tfrac 1 \bz \right) \, = \, \sigma_{ \kegel^\circ }
(\bz) \, .
\]
\end{theorem}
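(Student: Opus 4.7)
The plan is to prove the identity by reducing to simplicial rational cones, where a direct half-open parallelepiped computation works, and then to extend to general rational cones via a triangulation argument. Because both sides are rational functions in $\bz$, I would work throughout with formal identities of rational functions rather than convergent series.

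For the simplicial case, suppose $\kegel = \RR_{\ge 0} \bg_1 + \cdots + \RR_{\ge 0} \bg_k$ with primitive integer generators. Every lattice point in $\kegel$ admits a unique decomposition $\bp + \sum_i n_i \bg_i$ with $\bp \in \Pi \cap \ZZ^d$, where $\Pi := \{ \sum \lambda_i \bg_i : 0 \le \lambda_i < 1 \}$ is the half-open fundamental parallelepiped and $n_i \in \ZZ_{\ge 0}$; this gives $\sigma_\kegel(\bz) = \sigma_\Pi(\bz) \prod_i (1-\bz^{\bg_i})^{-1}$. An analogous decomposition for $\kegel^\circ$ using the opposite half-open parallelepiped $\Pi^* := \{ \sum \lambda_i \bg_i : 0 < \lambda_i \le 1 \}$ yields $\sigma_{\kegel^\circ}(\bz) = \sigma_{\Pi^*}(\bz) \prod_i (1-\bz^{\bg_i})^{-1}$. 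The key ingredient is the involution $\bx \mapsto (\bg_1+\cdots+\bg_k) - \bx$, which bijects $\Pi \cap \ZZ^d$ with $\Pi^* \cap \ZZ^d$ and so yields $\sigma_{\Pi^*}(\bz) = \bz^{\bg_1+\cdots+\bg_k} \sigma_\Pi(1/\bz)$. Combined with the elementary identity $\prod_i (1-\bz^{-\bg_i}) = (-1)^k \bz^{-(\bg_1+\cdots+\bg_k)} \prod_i (1-\bz^{\bg_i})$, a short calculation gives $(-1)^k \sigma_\kegel(1/\bz) = \sigma_{\kegel^\circ}(\bz)$, the desired identity for simplicial cones.

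For a general rational cone, I would pass to a half-open triangulation: decompose $\kegel$ as a disjoint union of half-open simplicial cones $\tilde\tau_1, \ldots, \tilde\tau_N$, each obtained from a simplicial cone of a triangulation of $\kegel$ (without new rays) by deleting a prescribed subset of its facets. A variant of the simplicial argument above, using an appropriately flipped half-open parallelepiped, gives a reciprocity of the form $(-1)^{\dim \kegel} \sigma_{\tilde\tau_i}(1/\bz) = \sigma_{\tilde\tau_i'}(\bz)$, where $\tilde\tau_i'$ is the complementary half-open cone obtained by swapping which facets are included. The hard part will be arranging the half-openness pattern so that simultaneously $\kegel = \bigsqcup_i \tilde\tau_i$ and $\kegel^\circ = \bigsqcup_i \tilde\tau_i'$; this requires choosing the half-open orientations compatibly with an ordering of the simplicial pieces (e.g., via a generic interior reference point). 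Once such a compatible half-open decomposition is in place, summing the half-open reciprocities over $i$ yields the identity for $\kegel$.
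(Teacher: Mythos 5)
The paper does not prove this statement at all: Theorem~\ref{thm:stanleyrec} is imported verbatim from Stanley's paper \cite{stanleyreciprocity} and used as a black box, so there is no in-paper argument to compare yours against. On its own merits, your outline is the standard modern proof (the one found in \cite[Chapter~4]{ccd}, for instance), and it is essentially correct. The simplicial case is complete: the unique decomposition of lattice points of $\kegel$ (resp.\ $\kegel^\circ$) over the half-open parallelepiped $\Pi$ (resp.\ $\Pi^*$), the lattice-point involution $\bx \mapsto (\bg_1 + \cdots + \bg_k) - \bx$, and the sign bookkeeping $\prod_i (1 - \bz^{-\bg_i}) = (-1)^k \bz^{-(\bg_1 + \cdots + \bg_k)} \prod_i (1 - \bz^{\bg_i})$ fit together exactly as you say, and working at the level of rational-function identities correctly sidesteps the fact that the two series converge on disjoint regions. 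The only part you assert rather than prove is the compatible half-open decomposition lemma: that for a generic reference point $\bw \in \kegel^\circ$ one can half-open the cells $\tau_i$ of a triangulation so that $\kegel = \bigsqcup_i \tilde\tau_i$ and, with all facet-inclusions complemented, $\kegel^\circ = \bigsqcup_i \tilde\tau_i'$ simultaneously. You correctly flag this as the crux and name the right mechanism (visibility of facets from $\bw$), but a complete write-up would need to prove it, or cite it; it is a genuine (if standard) lemma, not a formality. You should also note explicitly that because the triangulation introduces no new rays, every maximal cell has dimension $\dim\kegel$, so the sign $(-1)^{\dim\kegel}$ is uniform across the sum --- and that the half-open reciprocity for a simplicial cone with facet set $J$ removed pairs it with the complementary set $[k]\setminus J$, which is what makes the two decompositions match up after summing. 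With those two points made precise, your argument is a valid self-contained proof, which is more than the paper offers for this result.
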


\begin{proof}[Proof of Theorem~\ref{thm:chapotonrec}]
The variant of Brion's Theorem~\ref{thm:brion} for $\poly^\circ$ (see, e.g.,
\cite[Exercise~11.9]{ccd}) says
\[
  \sigma_{ \poly^\circ } (\bz) \, = \sum_{ \bv \text{ \rm vertex of } \poly } \sigma_{ \bv +
\kegel_\bv^\circ } (\bz) \, .
\]
Thus
\[
  \ehr_{\poly^\circ}^\lambda \left( \tfrac 1 q, t \right)
  \, = \sum_{ \bv \text{ \rm vertex of } \poly } q^{ -t \lambda(\bv) } \, \sigma_{
\kegel_\bv^\circ } \left( q^{ -\lambda_1 } , q^{ -\lambda_2 } , \dots, q^{ -\lambda_d } \right) 
\]
and we can apply Theorem~\ref{thm:stanleyrec} to the above summands to deduce
\[
  \ehr_{\poly^\circ}^\lambda \left( \tfrac 1 q, t \right)
  \, = \, (-1)^{ \dim \poly }  \sum_{ \bv \text{ \rm vertex of } \poly } q^{ -t \lambda(\bv) } \,
\sigma_{ \kegel_\bv } \left( q^{ \lambda_1 } , q^{ \lambda_2 } , \dots, q^{ \lambda_d } \right) 
  \, \stackrel{ \eqref{eq:vertexconespecialization} }{ = } \, (-1)^{ \dim \poly }
\chap_\poly^\lambda (q, [-t]_q) \, .
\]

The second claim follows with
\[
  [-t]_{ \frac 1 q }
  \, = \, \frac{ 1-q^t }{ 1 - \frac 1 q } 
  \, = \, q \, \frac{ 1-q^t }{ q - 1 }
  \, = \, - q \, [t]_q \, . \qedhere
\] 
\end{proof}


\begin{proof}[Proof of Theorem~\ref{thm:ratlchap}]
By Brion's Theorem~\ref{thm:brion},
\begin{align*}
  &\ehr_\poly^\lambda(q,kp+r)
  \, = \sum_{ \bm \in (kp+r) \poly } q^{ \lambda(\bm) } 
  \, = \, \sigma_{ (kp+r) \poly } \left( q^{ \lambda_1 } , q^{ \lambda_2 } , \dots, q^{ \lambda_d } \right) \\
  &\qquad = \sum_{ \bv \text{ \rm vertex of } \poly } \sigma_{ (kp+r) \bv + \kegel_\bv } \left( q^{ \lambda_1 } , q^{ \lambda_2 } , \dots, q^{ \lambda_d } \right) 
  \, = \sum_{ \bv \text{ \rm vertex of } \poly } q^{ k \, \lambda(p \bv) } \, \sigma_{ r \bv + \kegel_\bv } \left( q^{ \lambda_1 } , q^{ \lambda_2 } , \dots, q^{ \lambda_d } \right) .
\end{align*}
With~\eqref{eq:qpowers}, this yields
\[
  \ehr_\poly^\lambda(q,kp+r)
  \, = \sum_{ \bv \text{ \rm vertex of } \poly } \sigma_{ r \bv + \kegel_\bv } \left( q^{ \lambda_1 } , q^{ \lambda_2 } , \dots, q^{ \lambda_d } \right) \bigl( (q-1) [k]_q + 1 \bigr)^{ \lambda(p \bv) } ,
\]
which can be viewed as the rational version of Theorem~\ref{thm:firstthm}.
So the polynomial we are after is
\[
  \chap_\poly^{\lambda, r} (q,x) \, = \, \sum_{ \bv \text{ \rm vertex of } \poly } \sigma_{ r \bv
+ \kegel_\bv } \left( q^{ \lambda_1 } , q^{ \lambda_2 } , \dots, q^{ \lambda_d } \right) \bigl(
(q-1) x + 1 \bigr)^{ \lambda(p \bv) } ,
\]
which has degree $\max \{ \lambda(p \bv) : \, \bv \text{ \rm vertex of } \poly \}$.

Our proof of Theorem~\ref{thm:poles} extends almost verbatim to the rational case, as the
generators of $r \bv + \kegel_\bv$ are still $\bw - \bv$ for all vertices $\bw$ that are adjacent
to $\bv$; the only difference is that their integral version is now $g(p(\bw-\bv))$.
\end{proof}

\begin{proof}[Proof of Theorem~\ref{thm:rationalchapotonrec}]
We start as in our proof of Theorem~\ref{thm:chapotonrec}:
by Brion's Theorem~\ref{thm:brion} for $\poly^\circ$,
\[
  \sigma_{ \poly^\circ } (\bz) \, = \sum_{ \bv \text{ \rm vertex of } \poly } \sigma_{ \bv +
\kegel_\bv^\circ } (\bz) \, ,
\]
and so
\begin{align*}
  \ehr_{\poly^\circ}^\lambda \left( \tfrac 1 q, \ kp-r \right)
  \, &= \sum_{ \bv \text{ \rm vertex of } \poly } \sigma_{ (kp-r)\bv + \kegel_\bv^\circ } \left(
q^{ -\lambda_1 } , q^{ -\lambda_2 } , \dots, q^{ -\lambda_d } \right) \\
     &= \sum_{ \bv \text{ \rm vertex of } \poly } q^{ -k \lambda(p \bv) } \sigma_{ -r \bv + \kegel_\bv^\circ } \left( q^{ -\lambda_1 } , q^{ -\lambda_2 } , \dots, q^{ -\lambda_d } \right) .
\end{align*}
We now apply a variant of Stanley's Theorem~\ref{thm:stanleyrec} (see, e.g.,
\cite[Section~4.2]{ccd}), namely for a cone $\kegel$ with apex at the origin and any $\bv \in
\RR^d$, we have the following equation of rational functions:
\[
  \sigma_{ \bv + \kegel } (\bz) \, = \, (-1)^{ \dim \kegel } \sigma_{ -\bv + \kegel^\circ }
\left( \tfrac 1 \bz \right) .
\]
Thus
\begin{align*}
  \ehr_{\poly^\circ}^\lambda \left( \tfrac 1 q, \ kp-r \right)
  \, &= (-1)^{ \dim \poly } \sum_{ \bv \text{ \rm vertex of } \poly } q^{ -k \lambda(p \bv) }
\sigma_{ r \bv + \kegel_\bv } \left( q^{ \lambda_1 } , q^{ \lambda_2 } , \dots, q^{ \lambda_d }
\right) \\
     &= \, (-1)^{ \dim \poly } \chap_\poly^{\lambda, r} \left( q, [-k]_q \right) .
\qedhere
\end{align*}
\end{proof}


\bibliographystyle{amsplain}
\bibliography{bib}

\setlength{\parskip}{0cm} 

\end{document}